\title{The Module Isomorphism Problem for Finite Rings and Related Results}
\author{Iuliana Cioc\u anea-Teodorescu}
\date{}
\providecommand*{\twoheadrightarrowfill@}{%
  \arrowfill@\relbar\relbar\twoheadrightarrow
}
\providecommand*{\twoheadleftarrowfill@}{%
  \arrowfill@\twoheadleftarrow\relbar\relbar
}
\providecommand*{\xtwoheadrightarrow}[2][]{%
  \ext@arrow 0579\twoheadrightarrowfill@{#1}{#2}%
}
\providecommand*{\xtwoheadleftarrow}[2][]{%
  \ext@arrow 5097\twoheadleftarrowfill@{#1}{#2}%
}
\theoremstyle{plain}
\newtheorem{thm}{Theorem}
\numberwithin{thm}{section}
\newtheorem{cor}[thm]{Corollary}
\newtheorem{prop}[thm]{Proposition}
\newtheorem{definition}[thm]{Definition}
\theoremstyle{definition}
\newtheorem{note}[thm]{Note}
\newtheorem*{acknowledgements}{Acknowledgements}
\DeclareMathOperator{\End}{End}
\DeclareMathOperator{\Hom}{Hom}
\DeclareMathOperator{\length}{length}
\DeclareMathOperator{\new}{new}
\DeclareMathOperator{\old}{old}
\DeclareMathOperator{\ann}{ann}
\DeclareMathOperator{\im}{im}
\DeclareMathOperator{\J}{J}
\DeclareMathOperator{\spans}{span}
\begin{document}
\maketitle

\begin{abstract}
Let $R$ be a finite ring and let $M, N$ be two finite left $R$-modules. We present two distinct deterministic algorithms that decide in polynomial time whether or not $M$ and $N$ are isomorphic, and if they are, exhibit an isomorphism. As by-products, we are able to determine the largest isomorphic common  direct summand  between two modules and the minimum number of generators of a module. By not requiring $R$ to contain a field, avoiding computation of the Jacobson radical and not distinguishing between large and small characteristic, both algorithms constitute improvements to known results. We have not attempted to implement either of the two algorithms, but we have no reason to believe that they would not perform well in practice.
\end{abstract}

\let\thefootnote\relax\footnotetext{Mathematical Institute, Leiden University, Netherlands}
\let\thefootnote\relax\footnotetext{E-mail: ciocaneai@math.leidenuniv.nl}
\let\thefootnote\relax\footnotetext{Date: October 2014}

\section{Introduction}
\label{intro}
The \emph{module isomorphism problem} (MIP) can be formulated as follows: design a deterministic algorithm that, given a ring $R$ and two left $R$-modules $M$ and $N$, decides in polynomial time whether they are isomorphic, and if yes, exhibits an isomorphism.  

This problem is as fundamental as it is easily stated and has been studied extensively, mainly due to its broad range of applications. In particular, polynomial time algorithms were given in \cite{BL,CIK} for the case where $R$ is a finite dimensional algebra over a field and $M,N$ are finite dimensional modules over that field. For our purposes, $R$ will be a finite ring (not necessarily containing a field) and $M,N$ will be finite $R$-modules. We give an algorithm as described by the following theorem:

\begin{thm}\label{mth1}
There exists a deterministic polynomial time algorithm that, given a finite ring $R$ and two finite $R$-modules $M$ and $N$, outputs maximal length direct summands of $M$ and $N$ that are isomorphic. Moreover, the algorithm produces a map $f\in\Hom_R(M,N)$ inducing such an isomorphism. 
\end{thm}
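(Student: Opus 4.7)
The plan rests on the following elementary observation: given any pair $f \in \Hom_R(M, N)$ and $g \in \Hom_R(N, M)$, Fitting's lemma applied to $h = gf \in \End_R(M)$ produces in polynomial time a decomposition $M = M_0 \oplus M_1$ on which $h$ is respectively nilpotent and bijective. Since $M$ is finite, one simply iterates $h$ at most $\length(M)$ times and reads off the stable image and kernel. On the bijective piece $M_1$, the restriction $f|_{M_1}$ is split injective with explicit splitting $(h|_{M_1})^{-1} \circ g$, so $f(M_1)$ is a direct summand of $N$, and $f$ itself witnesses $M_1 \cong f(M_1)$. Thus each pair $(f, g)$ already yields an isomorphic common direct summand equipped with a concrete isomorphism.

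I would therefore begin by computing generating sets of $\Hom_R(M, N)$ and $\Hom_R(N, M)$ as $\mathbb{Z}$-modules; these are finite abelian groups presentable as kernels of explicit $\mathbb{Z}$-linear maps and can be obtained in polynomial time via the Smith normal form. The algorithm then iteratively enlarges the common summand: maintain a partial isomorphism $\varphi \colon A \xrightarrow{\sim} B$ between direct summands $A \subseteq M$, $B \subseteq N$ with complements $A^c, B^c$; at each step, compute generators of $\Hom_R(A^c, B^c)$ and $\Hom_R(B^c, A^c)$ and apply the Fitting procedure to each element of a polynomial-size test set of pairs. If any such pair produces a nontrivial Fitting summand, append it to $\varphi$ and iterate. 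The process terminates in at most $\length(M) + \length(N)$ steps, since $\length(A)$ strictly increases after every successful iteration.

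The main obstacle is certifying that the resulting $\varphi$ is actually maximal --- equivalently, that the polynomial-size test set is rich enough to detect \emph{every} isomorphic common direct summand. My intended strategy is to establish a ``visibility lemma'': if two finite $R$-modules $X$ and $Y$ share a nonzero isomorphic indecomposable summand, then some pair drawn from the test set has nonzero Fitting image on the $X$ side. I would prove this by an idempotent-lifting argument \emph{internal} to the finite ring $\End_R(X \oplus Y)$, reducing modulo an appropriate nil ideal to a setting in which Krull--Schmidt makes common summands transparent, and then lifting back. The crucial feature is that all computations take place in the endomorphism ring, which is manufactured from $R$ by ring operations alone, so Jacobson radicals of $R$ itself never enter the picture and no distinction between large and small characteristic arises --- this is precisely what yields the generality claimed. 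Finally, extending the accumulated $\varphi$ by zero produces the promised map $f \in \Hom_R(M, N)$ inducing an isomorphism of maximal length common direct summands.
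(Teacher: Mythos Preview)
Your overall architecture coincides with the paper's: use Fitting's lemma on a composite $gf\in\End_R(M)$ to peel off an isomorphic pair of summands, replace $M,N$ by the complements, and iterate until nothing is left. The only substantive divergence is in how the ``good'' pair $(f,g)$ is located. The paper first shows that $f$ may be taken from any $k$-generating set of $\Hom_R(M,N)$ (via the criterion that $f$ fails to be a splitter iff $\Hom_R(N,M)\,f\subseteq\J(\End_R M)$), and then, for each such $f$, runs a separate constructive routine (its Proposition~\ref{find}) that, given a non-nilpotent left ideal of a finite ring, descends to an explicit non-nilpotent element; applied to the left ideal $\Hom_R(N,M)\,f$ of $\End_R(M)$, this manufactures $g$. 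Crucially, the $g$ so produced need not lie in any fixed generating set of $\Hom_R(N,M)$. You instead test only the products $g_jf_i$ over generators on both sides, which is algorithmically simpler but shifts all the weight onto your ``visibility lemma''.

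That lemma is true, but your idempotent-lifting sketch does not prove it: lifting an idempotent in $\End_R(X\oplus Y)$ gives no control over which product of \emph{prescribed} generators is non-nilpotent, and Krull--Schmidt speaks about module decompositions rather than about nilpotence of particular ring elements. A direct argument runs as follows. Set $E=\End_R(M)$ and let $I=\Hom_R(N,M)\,\Hom_R(M,N)$, a two-sided ideal of $E$ that is $\mathbb{Z}$-spanned by the products $g_jf_i$. A common nonzero summand exists exactly when $I\not\subseteq\J(E)$. If every $g_jf_i$ were nilpotent, then so would be their images in the semisimple quotient $\bar E=E/\J(E)$, so the nonzero ideal $\bar I$ would be $\mathbb{Z}$-spanned by nilpotents. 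But $\bar E$ is a finite product of matrix rings $M_n(\mathbb{F}_q)$, and $\bar I$ projects onto at least one full factor; composing that projection with the matrix trace gives a nonzero $\mathbb{F}_p$-linear functional on $\bar I$ that annihilates every nilpotent, a contradiction. With this in hand your variant goes through and is, if anything, a little cleaner than the paper's descent.
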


We establish the result in Theorem \ref{mth1} by a direct generalisation of the methods given in \cite{BL}, where the rings considered were finitely generated algebras over a field and the modules were finite dimensional over that field. This approach relies on the ability of finding non-nilpotent elements in non-nilpotent ideals of the endomorphism ring $\End_{R}(M)$ (cf. Proposition \ref{find}). As a direct consequence of Theorem \ref{mth1} we have that:

\begin{cor}
There exists a deterministic polynomial time algorithm that, given a finite ring $R$ and two finite $R$-modules $M$ and $N$, decides whether $M$ and $N$ are isomorphic, and if they are, exhibits an isomorphism.
\end{cor}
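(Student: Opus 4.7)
The plan is to reduce directly to Theorem \ref{mth1}. First I would invoke the algorithm provided by that theorem on input $(R,M,N)$; in polynomial time it returns direct summands $M_0$ of $M$ and $N_0$ of $N$, of maximal length among pairs for which $M_0\cong N_0$, together with a map $f\in\Hom_R(M,N)$ inducing such an isomorphism.

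Next I would test whether $M_0=M$ and $N_0=N$. Because $M_0$ is realised as a direct summand of $M$, the equality $M_0=M$ is equivalent to $|M_0|=|M|$, a test that runs in polynomial time; the analogous test handles $N_0=N$. If both equalities hold, the algorithm declares $M\cong N$ and returns $f$ as a witnessing isomorphism; otherwise it declares them non-isomorphic.

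For correctness, if both tests succeed then $f$ is by construction an isomorphism $M=M_0\xrightarrow{\sim}N_0=N$. Conversely, suppose $M\cong N$; then $M$ itself is (isomorphic to) a common direct summand of maximal possible length, so by the maximality clause of Theorem \ref{mth1} we have $\length(M_0)=\length(M)$. Since $M_0$ is a direct summand of $M$ of the same length, $M_0=M$, and likewise $N_0=N$, so both tests succeed. No genuine obstacle arises in this step: the real algorithmic work, namely locating a maximal isomorphic common direct summand, has been deferred to Theorem \ref{mth1}, and the polynomial running time of the corollary is inherited from that theorem together with the trivial cardinality comparisons.
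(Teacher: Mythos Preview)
Your proposal is correct and matches the paper's intended argument: the paper states the corollary as a ``direct consequence of Theorem~\ref{mth1}'' without further proof, and what you have written is precisely the natural elaboration of that remark. The only thing to note is that the paper does not spell out the cardinality/length comparison at all, so your write-up is if anything more explicit than the original.
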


In Section 3, we give a second algorithm that solves the module isomorphism problem over finite rings. As a by-product, we get:

\begin{thm}\label{sideexit}
There exists a deterministic polynomial time algorithm that, given a finite ring $R$ and a finite $R$-module $M$, determines the minimum number of $R$-generators of $M$ and outputs a list of such generators. 
\end{thm}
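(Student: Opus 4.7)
The plan is to compute $d(M)$, the minimum number of generators of $M$, by producing a Krull--Schmidt decomposition and then analysing the combinatorics of the simple tops of its indecomposable summands. First, I would use the primitive of Proposition~\ref{find} to extract a complete system of orthogonal primitive idempotents in $\End_R(M)$, yielding $M \cong \bigoplus_i N_i^{a_i}$ with pairwise non-isomorphic indecomposable $R$-modules $N_i$; the multiplicities $a_i$ can be read off by using the MIP algorithm from Theorem~\ref{mth1} to group the indecomposable pieces into isomorphism classes.

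The key structural fact is that $d(M) = d(\bar M)$, where $\bar M := M/\rad(R)M$ is the semisimple top of $M$, and for a semisimple module the minimum number of generators equals the maximum multiplicity of any isotypic component. Since $R$ is finite and each $N_i$ is indecomposable, $\End_R(N_i)$ is local and $N_i$ has a unique simple top $T_i := N_i/\rad(N_i)$, so
\[
d(M) = \max_T \sum_{i:\, T_i \cong T} a_i,
\]
where the maximum ranges over isomorphism classes of simple $R$-modules $T$. To avoid computing $\rad(R)$ directly, the equivalence ``$T_i \cong T_j$'' on the indecomposable summands can be decided by a Hom-based criterion that reduces to isomorphism tests on small auxiliary modules built from $\Hom_R(N_i,N_j)$ and $\Hom_R(N_j,N_i)$; these tests are dispatched by the MIP algorithm.

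Once the top-multiplicities are in hand, I would construct a minimum generating set greedily: maintain a partial generating submodule $N_\ell \subseteq M$ and, at each step, select $m_{\ell+1} \in M$ whose image in $M/N_\ell$ covers one fresh copy of every simple isotypic component occurring in the top of $M/N_\ell$. Such an element exists by the structural analysis, and it can be located deterministically by linear-algebraic computations over $R$ guided by the decomposition data; the loop terminates after exactly $d(M)$ steps.

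The main obstacle I anticipate is implementing the ``isomorphic tops'' test for pairs of indecomposable summands without explicit access to the Jacobson radical of $R$. I would handle this by reducing it to polynomially many MIP-checks on auxiliary modules assembled from the relevant Hom-spaces, so that the entire procedure runs in polynomial time: it consists of polynomially many MIP invocations together with polynomial linear-algebraic manipulations over $R$.
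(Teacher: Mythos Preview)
Your argument has a genuine structural gap: it is not true that an indecomposable module over a finite ring has a simple top. The implication ``$\End_R(N)$ local $\Rightarrow N/\rad(R)N$ simple'' holds for indecomposable \emph{projectives}, but fails in general. Take $R$ to be the path algebra over $\mathbb{F}_2$ of the Kronecker quiver $1\rightrightarrows 2$ (so $|R|=16$), and let $N$ be the representation with $\mathbb{F}_2^{\,2}$ at each vertex, one arrow acting as the identity and the other as $\left(\begin{smallmatrix}0&1\\0&0\end{smallmatrix}\right)$. Then $\End_R(N)\cong\mathbb{F}_2[x]/(x^2)$ is local, so $N$ is indecomposable, yet $N/\rad(R)N\cong S_1^{\,2}$ and hence $d(N)=2$; your formula applied to $M=N$ would output $1$. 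Once tops of indecomposable summands are allowed to be non-simple, the expression $\max_T\sum_{i:T_i\cong T}a_i$ has no meaning, and both the ``isomorphic tops'' test you flag as the main obstacle and the greedy generator construction collapse with it. (Even in the favourable case where all tops happen to be simple, the formula is still off: it omits the multiplicity $a_T$ of $T$ in $R/\rad(R)$, and the correct value is $\max_T\lceil m_T/a_T\rceil$ with $M/\rad(R)M\cong\bigoplus_T T^{m_T}$ and $R/\rad(R)\cong\bigoplus_T T^{a_T}$.)

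For contrast, the paper's proof avoids Krull--Schmidt decomposition of $M$ entirely. It maintains a list $S_1,\dots,S_t$ of \emph{candidate} simple modules (initially $t=1$, $S_1={}_RR$) together with the nilpotent ideal $\mathfrak{a}=\bigcap_i\ann_R(S_i)\subseteq\J(R)$, and tries to build isomorphisms $R/\mathfrak{a}\cong\bigoplus_i S_i^{a_i}$ and $M/\mathfrak{a}M\cong\bigoplus_i S_i^{c_i}$ via the routine of Proposition~\ref{aa1}. Any step that would require the $S_i$ to be simple and pairwise non-isomorphic but fails (a nonzero proper submodule appears, or some $\Hom_R(S_i,S_h)\neq 0$ with $i\neq h$) refines the list and restarts; the potential $\sum_i(2\length(S_i)-1)$ bounds the number of restarts. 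On success one reads off $n=\max_i\lceil c_i/a_i\rceil$ and lifts the surjection $(R/\mathfrak{a})^n\twoheadrightarrow M/\mathfrak{a}M$ through the nilpotent $\mathfrak{a}$ to obtain $n$ generators of $M$. This side-exit mechanism is precisely what lets the algorithm proceed without ever computing $\J(R)$ or decomposing $M$ into indecomposables.
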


In particular, we can determine if a module is cyclic or free (by comparing cardinalities).\\

It is important to note that the algorithms given here work for any finite ring, do not distinguish between large and small characteristic and avoid computation of the Jacobson radical, thus constituting an improvement to known results. 
Moreover, the algorithm in Theorem \ref{sideexit} is an interesting object \emph{per se}, due to its structure and the techniques it employs.
A common approach to this type of problems is to reduce to the semisimple case and then ``lift" (e.g. \cite{CIK,IKS}). In our algorithm, we work \emph{as if} the ring were semisimple and we have a list, $S_1,\ldots ,S_t$, of candidates for the isomorphism classes of simple modules composing it. During the running of the algorithm, we allow ourselves to be contradicted in our assumption about the simplicity of the $S_i$, in which case we update our list, quotient the ring by an appropriate two-sided nilpotent ideal and start again. If we are not contradicted, we may still draw conclusions. In this way, there is always a side-exit available and what forces an output in polynomial time is that we cannot take the side-exit too many times.\\

Rings are always assumed to contain a unit element, but are not necessarily commutative. Modules are always left unital, unless otherwise specified. We aim to develop the theory in as great a generality as possible, before restricting to finite rings for algorithmic purposes.

In the absence of an underlying field, a finite ring $R$ is assumed to be given to the algorithm via its group structure. Let $R^{+}$ be the underlying additive group of $R$. Then 
\begin{equation}
R^{+}\cong \bigoplus_{i=1}^t \mathbb{Z}/n_i\mathbb{Z},
\end{equation}
for some $t\in\mathbb{Z}_{\geq 0}$, $n_1,\ldots ,n_t \in \mathbb{Z}_{> 0}$. So, in order to specify $R^{+}$, we give the algorithm a sequence of integers $n_1,\ldots ,n_t$ . To make it into a ring, we also give a bilinear map 
\begin{equation}
R^{+}\times R^{+}  \to R^{+},\quad (e_i,e_j)  \mapsto e_ie_j,
\end{equation}
where $\lbrace e_i\rbrace_{i=1}^t$ are the additive generators of $R^{+}$ and we express $e_ie_j$ linearly in terms of the $e_k$'s. This amounts to giving $t^3$ numbers.

To input a module, we give a finite abelian group $(M,+)$, as before, and a bilinear map
$R\times M\to M$,
where, for every additive generator of $R$ and $M$, we express the image in terms of the additive generators of $M$.

\section{MIP via non-nilpotent endomorphisms}
\label{sec:1}

\subsection{Finding non-nilpotent elements}
\label{sec:2}

An ideal $I$ of a ring $R$ is said to be \emph{nil} if all its elements are nilpotent. The following is a well known fact:

\begin{prop}
If $R$ is a left-Artinian ring, then a left (or right) ideal $I$ of $R$ is nil if and only if it is nilpotent.
\end{prop}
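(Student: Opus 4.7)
The easy direction is that nilpotent ideals are automatically nil: if $I^n=0$ then every $x\in I$ satisfies $x^n=0$, so $x$ is nilpotent. The substance of the statement is the converse, and this is where the Artinian hypothesis is essential.

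For the converse, my plan is to consider the descending chain of powers
\begin{equation}
I\supseteq I^2\supseteq I^3\supseteq\cdots
\end{equation}
of two-sided products. By the descending chain condition on left ideals (which applies since each $I^k$ is in particular a left ideal), this chain stabilises at some $n$, so $J:=I^n$ satisfies $J^2=J$. The goal then reduces to showing $J=0$, since that gives $I^n=0$.

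To show $J=0$, I would argue by contradiction. Assume $J\neq 0$ and consider the family
\begin{equation}
\mathcal{S}=\{L\subseteq J:L\text{ is a left ideal of }R\text{ with }JL\neq 0\}.
\end{equation}
This family is non-empty because $J\cdot J=J\neq 0$, so $J\in\mathcal{S}$. By the Artinian hypothesis, $\mathcal{S}$ contains a minimal element $L_0$. Pick $x\in L_0$ with $Jx\neq 0$; then $Jx$ is itself a left ideal contained in $L_0$, and $J\cdot(Jx)=J^2 x=Jx\neq 0$, so $Jx\in\mathcal{S}$. Minimality of $L_0$ forces $Jx=L_0$, and in particular $x\in Jx$, so there exists $y\in J$ with $yx=x$.

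The final step — which is the crux of the argument — is to exploit that $y$ is nilpotent (since $y\in J\subseteq I$ and $I$ is nil): iterating the relation $x=yx$ yields $x=y^m x$ for every $m\geq 1$, and choosing $m$ with $y^m=0$ gives $x=0$, contradicting $Jx\neq 0$. Hence $J=0$, completing the proof. The only genuinely non-routine step is the minimality trick that produces the fixed-point equation $x=yx$; everything else is a straightforward application of the descending chain condition.
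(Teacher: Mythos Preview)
Your argument is correct when $I$ is a \emph{left} ideal, and its skeleton matches the paper's: stabilise the chain of powers, choose a minimal left ideal not annihilated by the stable power, extract a fixed-point equation $x=yx$, and contradict. Your endgame is in fact a bit more elementary than the paper's --- you iterate $x=y^m x$ and use that $y\in I$ is nilpotent, whereas the paper first embeds $I$ into $\J(R)$ (citing \cite{lam2001first}, Lemma~4.11) and then argues that $1-y$ is simultaneously a unit and a left zero-divisor.

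There is, however, a genuine gap in the right-ideal case. You write that the descending chain condition on left ideals applies ``since each $I^k$ is in particular a left ideal'', but if $I$ is only a right ideal then so is every $I^k$, and a left-Artinian ring need not satisfy any chain condition on right ideals. So you cannot conclude that the chain $I\supseteq I^2\supseteq\cdots$ stabilises, and the argument does not get off the ground. The paper's detour through $\J(R)$ is precisely what handles this: any nil one-sided ideal sits inside $\J(R)$, which is two-sided, so its powers are left ideals and the left DCC applies. That passage is not decorative --- it is what makes the statement go through uniformly for right ideals.
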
 
\begin{proof}
Clearly if a left (or right) ideal is nilpotent, then it is nil. For the other direction, note that any nil left (or right) ideal of $R$ is contained in the Jacobson radical $\J(R)$ (see \cite{lam2001first}, Lemma 4.11). So it is enough to show that $J:=\J(R)$ is nilpotent.

Suppose $J$ is not nilpotent. Since $R$ is left-Artinian, the sequence of consecutive powers of $J$ must stabilize, i.e. $\exists n\in \mathbb{Z}_{> 0} \text{ such that } J^n=J^{2n}\neq 0$. Then there exists a left ideal $L$ s.t. $J^nL\neq 0$. Suppose $L$ is minimal with this property. Since $J^nL\neq 0$, there exists $a\in L$ such that $J^na\neq 0$. Now $Ra\subseteq L$ and $J^n a\subseteq L$. Moreover, $0\neq J^na=J^{n}J^{n}a$ and $0\neq J^n a\subseteq J^n Ra$, so by minimality of $L$, we have that $L=Ra=J^na$. Hence $a=xa$, for some $x\in J^n$. It follows that $a=0$ (otherwise, $1-x$ would be both a unit and a left zero-divisor, which is not possible). This gives the desired contradiction.
\end{proof}

If $R$ is finite, the proof above can be translated into an algorithm for finding a non-nilpotent element in a non-nilpotent left ideal of $R$:

\begin{prop}\label{find}
There exists a deterministic polynomial time algorithm that, given a finite ring $R$ and a left ideal $I$, determines whether or not $I$ is nilpotent and if it is not, produces a non-nilpotent element lying inside it.
\end{prop}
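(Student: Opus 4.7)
The plan is to translate the non-constructive proof above into an algorithm, replacing the appeal to the Jacobson radical by an explicit stabilisation of the powers of $I$, and reading its central step constructively: if we produce a nonzero $a\in R$ and an $x\in I$ with $xa=a$, then $x^{m}a=a\neq 0$ for all $m\geq 1$, so $x$ is automatically non-nilpotent. The algorithm thus aims to build such a pair $(a,x)$.

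First I would compute $I,I^{2},I^{4},\ldots$ by repeated squaring, each power stored as an additive subgroup of $R^{+}$ via a generating set. This chain is non-increasing and must stabilise within $O(\log|R|)$ squarings; set $J:=I^{2^{k}}$ at the stable point, so that $J=J^{2}$. If $J=0$ then $I$ is nilpotent and we are done. Otherwise $J\neq 0$, and the remainder of the algorithm maintains an element $a\in R$ satisfying $Ja\neq 0$, starting from $a=1$.

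At each step I compute $Ja$ and test whether $a\in Ja$. If $a\in Ja$, solving the linear system ``$xa=a$ with $x\in J$'' over $R^{+}$ produces the desired $x\in J\subseteq I$. If $a\notin Ja$, then $Ja\subsetneq Ra$; among the additive generators $g_{1},\ldots,g_{s}$ of $J$ I pick one, say $g_{i}$, with $J(g_{i}a)\neq 0$. Such a $g_{i}$ exists, for otherwise bilinearity would force $J\cdot Ja=J^{2}a=0$, contradicting $Ja\neq 0$. Replacing $a$ by $g_{i}a$ preserves the invariant $Ja\neq 0$ and strictly shrinks $Ra$, since $R(g_{i}a)\subseteq Ja\subsetneq Ra$. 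Each strict shrinkage divides $|Ra|$ by a factor of at least $2$, so the descent terminates within $O(\log|R|)$ iterations, forcing the case $a\in Ja$.

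The main obstacle is to verify that every subroutine runs in polynomial time: computing $I^{2^{k+1}}$ from $I^{2^{k}}$, computing $Ja$, testing membership $a\in Ja$, and solving $xa=a$ for $x\in J$. All of these reduce to Smith normal form computations over $\mathbb{Z}$ applied to matrices of polynomial size in the input, which is a standard polynomial-time procedure. The only conceptually subtle point is the justification that some generator $g_{i}$ of $J$ must satisfy $J(g_{i}a)\neq 0$, which follows from the equality $J=J^{2}$ together with the invariant $Ja\neq 0$.
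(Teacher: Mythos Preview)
Your argument is correct and follows essentially the same route as the paper: stabilise the powers of $I$ to an idempotent ideal $J$, then descend along a chain $Ra\supsetneq Ja\supseteq R(g_ia)\supsetneq\cdots$ maintaining the invariant $Ja\neq 0$ until $a\in Ja$, at which point any $x\in J$ with $xa=a$ is non-nilpotent. The only cosmetic differences are that you initialise with $a=1$ rather than an element of $I^n$, and you are more explicit about repeated squaring and Smith normal form; conceptually the descent and its justification via $J^2=J$ are identical to the paper's.
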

\begin{proof}
Suppose $R$ is a $k$-algebra, where $k$ is a commutative ring ($R$ will certainly be an algebra over its centre or its prime subring) and let $I$ be a non-nilpotent left ideal of $R$.  Since $R$ is finite, we can find $n\in\mathbb{Z}_{>0}$ such that $I^n=I^{2n}\neq 0$. Suppose $I^n$ is generated over $k$ by a set $A$ and over $R$ by a set $B$, i.e. 
\begin{equation}
I^n=\sum_{\alpha\in A} k\alpha=\sum_{\beta\in B} R\beta.
\end{equation}
Then $0\neq I^{2n}=\sum_{\alpha\in A \atop \beta\in B} R\beta\alpha$, so there exists $b\in B$ and $a\in A$ such that $b\cdot a\neq 0$.
Consider $I^n a\subseteq Ra$ (note $I^na\neq 0$ since it contains $ba$). If equality holds, then we can write $a=xa$, for some $x\in I^n$ and since $a\neq 0$, it must be the case that $x$ is non-nilpotent, as required (otherwise $1-x$ would be a unit and a left zero divisor at the same time, which is impossible). Suppose now that the inclusion is strict. Then there exists $c\in I^na$ such that $I^n c\neq 0$ (otherwise $I^nI^na=I^na=0$, which is a contradiction, since $0\neq ba\in I^{n}a$). 
Moreover, we will be able to find such a $c$ among the $k$-generators of $I^n a$. 
We have now produced a smaller ideal $Ra\supsetneq I^na\supset Rc$ such that $I^nc\neq 0$. We replace $a$ by $c$ and keep going. This process must terminate after a finite number of steps, since $R$ is finite.
We also note that $\length({_R}R)\leq \log_2{\vert R \vert}$ and $I$ is nilpotent if and only if $I^{\length({_R}R)}=0$, so it is possible to test if $I$  is nilpotent. 
\end{proof}

\subsection{Splitters}
Let $R$ be an algebra over a commutative Artinian ring $k$ such that $R$ is finitely generated as a $k$-module. Let $M_1$ and $M_2$ be two $R$-modules of finite length over $k$. Note that $\Hom_R(M_1,M_2)$ is a left $k$-module and a right $\End_{R}(M_1)$-module. Moreover, $\End_{R}(M_1)$ is left (and right) Artinian. 
\\  

Following \cite{BL}, we make the following definition:
\begin{definition}
Let $f\in\Hom_{R}(M_1,M_2)$. 
A decomposition $M_1=N_1\oplus K_1$, for $N_1,K_1\leq M_1$, is called an \emph{$f$-decomposition} if $N_1\neq 0$, $\ker(f)\leq K_1$ and the image of $N_1$ under $f$, which we denote by $fN_1$, is a direct summand of $M_2$.
If $M_1$ has an $f$-decomposition, we say that $f$ is a \emph{splitter}.
\end{definition}

The following proposition and its proof, together with Proposition \ref{find}, allow us to algorithmically decide if a given homomorphism $f$ is a splitter, and if it is, to produce an $f$-decomposition:

\begin{prop}\label{splitternilp}
Let $f\in\Hom_{R}(M_1,M_2)$. Then $f$ is a splitter if and only if there exists $g\in \Hom_{R}(M_2,M_1)$ such that $gf$ is not nilpotent.
\end{prop}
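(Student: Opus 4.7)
The plan is to prove both implications using Fitting-type arguments available because $M_1$ has finite length over $k$ (whence $\End_R(M_1)$ is Artinian and Fitting's lemma applies to any endomorphism of $M_1$).

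For the easy direction, suppose $f$ is a splitter with $f$-decomposition $M_1 = N_1 \oplus K_1$ and $M_2 = fN_1 \oplus L$. Since $\ker(f) \subseteq K_1$ and $N_1 \cap K_1 = 0$, the restriction $f|_{N_1} \colon N_1 \to fN_1$ is an isomorphism; call its inverse $\phi$. Define $g \colon M_2 \to M_1$ as the composition of the projection $M_2 \twoheadrightarrow fN_1$, the map $\phi$, and the inclusion $N_1 \hookrightarrow M_1$. Then for every $x \in N_1$ one checks $gf(x) = x$, so $(gf)^n|_{N_1} = \mathrm{id}_{N_1}$ for all $n$. Since $N_1 \neq 0$, $gf$ is not nilpotent.

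For the harder direction, suppose $g \in \Hom_R(M_2,M_1)$ is such that $h := gf \in \End_R(M_1)$ is not nilpotent. Because $M_1$ has finite length over $k$, Fitting's lemma applies to $h$: for $n$ large enough one has $M_1 = \im(h^n) \oplus \ker(h^n)$, with $h$ restricting to an automorphism of $N_1 := \im(h^n)$ and to a nilpotent endomorphism of $K_1 := \ker(h^n)$. Since $h$ is not nilpotent, $N_1 \neq 0$; and since $\ker(f) \subseteq \ker(h) \subseteq \ker(h^n) = K_1$, the first two conditions for an $f$-decomposition are met.

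It remains to show that $fN_1$ is a direct summand of $M_2$, and this is the main technical point. Let $\pi \colon M_1 \twoheadrightarrow N_1$ be the projection with kernel $K_1$, and let $\sigma$ denote the inverse of the automorphism $h|_{N_1} = (gf)|_{N_1}$ of $N_1$. Define
\begin{equation}
q \;=\; f \circ \sigma \circ \pi \circ g \;\colon\; M_2 \longrightarrow M_2.
\end{equation}
Evidently $\im(q) \subseteq fN_1$, and a direct computation using $gf(N_1) = N_1$ shows that $q$ restricts to the identity on $fN_1$. Therefore $q$ is an idempotent $R$-endomorphism of $M_2$ with image exactly $fN_1$, which yields $M_2 = fN_1 \oplus \ker(q)$. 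Hence $M_1 = N_1 \oplus K_1$ is an $f$-decomposition and $f$ is a splitter. The one point requiring genuine care is the construction of $q$: without the intermediate projection $\pi$ onto $N_1$, the composition $f \circ \sigma \circ g$ would not make sense, and without the Fitting decomposition the map $\sigma$ would not exist.
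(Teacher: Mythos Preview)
Your proof is correct. The ``only if'' direction is the standard argument (the paper in fact omits it, citing \cite{BL}), and your ``if'' direction is valid: since $\ker(f)\subseteq K_1$, the restriction $f|_{N_1}$ is injective, so for $y=f(x)$ with $x\in N_1$ one has $g(y)=h(x)\in N_1$, hence $\pi(g(y))=h(x)$ and $q(y)=f(\sigma(h(x)))=f(x)=y$. Together with $\im(q)\subseteq fN_1$ this shows $q$ is idempotent with image $fN_1$, and the splitting follows.

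The paper takes a different route for the ``if'' direction. Rather than applying Fitting's lemma only to $s:=gf$ on $M_1$ and then building an idempotent of $M_2$ by hand, it applies Fitting simultaneously to $s$ on $M_1$ and to $t:=fg$ on $M_2$, obtaining $M_1=\ker(s^d)\oplus\im(s^d)$ and $M_2=\ker(t^d)\oplus\im(t^d)$ for a common exponent $d$. It then verifies by a short chain of inclusions that $f(\im(s^d))=\im(t^d)$, which is already known to be a direct summand of $M_2$. This is a bit more symmetric and avoids the explicit construction of $q$; on the other hand, your argument has the virtue of making the complementary summand $\ker(q)$ completely explicit, and only invokes Fitting once. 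Both approaches yield the same $N_1=\im((gf)^d)$, and in fact one can check that $fN_1=\im((fg)^d)$, so the resulting decompositions coincide.
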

\begin{proof}
The proof of this proposition is the same as the one given in \cite{BL}, Lemma 3.3, which treats the case when $R$ is a finitely generated algebra over a field and $M_1,M_2$ are finite dimensional over that field. For completeness we include the proof of the ``if" statement here.

  
Suppose $g\in\Hom_R(M_2,M_1)$ is such that $gf$ is not nilpotent. Let $s=gf$ and $t=fg$. Since $M_1$ and $M_2$ have finite length over $k$ (so are both Artinian and Noetherian over R), we can apply Fitting's Lemma (see \cite{lang}, Chapter X, Proposition 7.3) to say that $M_1=\ker(s^d)\oplus \im(s^d)$, $M_2=\ker(t^d)\oplus \im(t^d)$, for $d=\max\lbrace \length_R(M_1),\length_R(M_2)\rbrace$ and the restriction of $t$ to $\im(t^d)$ is an automorphism. We have that $s^dM_1\neq 0$ since $s$ is not nilpotent, $\ker(f)\leq \ker(s^d)$ by definition of $s$ and
\begin{equation*}
f(s^dM_1)=t^d(fM_1)\subseteq t^dM_2 = fg(t^dM_2)=fs^d(gM_2 )\subseteq f(s^dM_1), 
\end{equation*}
so $f\im(s^d)=f(s^dM_1) = t^dM_2 =\im(t^d)$, which is what is required for $f$ to be a splitter. 
\end{proof}

Suppose now that we are given $f\in \Hom_R(M_1,M_2)$ and we wish to know whether it is a splitter. To do this, we first compute a set $C$ of $k$-generators of $\Hom_{R}(M_2,M_1)$. Now consider the left ideal $I$ of $\End_R(M_1)$ generated by the set $Cf=\lbrace cf \mid c\in C\rbrace$. If $I$ is nilpotent (which we can determine by Proposition \ref{find}), then since $\Hom_{R}(M_2,M_1)f=\spans _k (Cf)= I$, we will not be able to find a non-nilpotent element of the form $gf$, so $f$ cannot be a splitter. Otherwise the algorithm of Proposition \ref{find} will produce some $g\in\Hom_R(M_2,M_1)$ witnessing that $f$ is a splitter and we can produce an $f$-decomposition by Proposition \ref{splitternilp}.\\

We now have a way of identifying splitters. But we would not like to have to look for them over all homomorphisms. The following proposition tells us that we can restrict our attention to a considerably smaller set: 
\begin{prop}\label{lookfor}
If a splitter exists, then there exists a splitter in any set of $k$-module generators of $\Hom_{R}(M_1,M_2)$ and in any set of $\End_{R}(M_1)$-generators of $\Hom_{R}(M_1,M_2)$.
\end{prop}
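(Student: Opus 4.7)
The plan is to reformulate the splitter condition in terms of the Jacobson radical of $E := \End_R(M_1)$ and then show that the set of non-splitters is a sub-bimodule of $H := \Hom_R(M_1,M_2)$, so in particular it cannot equal $H$ unless no splitter exists.

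Setting $H' := \Hom_R(M_2,M_1)$, I would first combine Proposition \ref{splitternilp} with Proposition 2.1 (applied to the left-Artinian ring $E$) to obtain the clean restatement: $f \in H$ is a splitter if and only if the left ideal $H' f$ of $E$ is not nil. I would then introduce
\begin{equation*}
N := \{\, f \in H : H' f \subseteq \J(E)\,\}
\end{equation*}
and argue that $N$ is precisely the set of non-splitters. One containment uses that $\J(E)$ is itself nilpotent by Proposition 2.1, so any left ideal contained in it is nil; the other uses that any nil one-sided ideal of $E$ lies in $\J(E)$ (the standard fact invoked already in the proof of Proposition 2.1).

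The key verification is that $N$ is simultaneously a $k$-submodule and a right $E$-submodule of $H$. Closure under addition is immediate from $H'(f_1+f_2) \subseteq H' f_1 + H' f_2$; closure under the $k$-action follows since $\J(E)$ is a $k$-submodule of $E$ (being a two-sided ideal of the $k$-algebra $E$); closure under the right $E$-action follows from
\begin{equation*}
H'(fe) = (H' f)\, e \subseteq \J(E)\cdot e \subseteq \J(E),
\end{equation*}
again exploiting that $\J(E)$ is two-sided.

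With this in place, the conclusion is immediate: if a splitter exists in $H$ then $N \subsetneq H$, and for any generating set $\{c_1,\ldots,c_m\}$ of $H$ --- either as a $k$-module or as a right $E$-module --- not all of the $c_i$ can lie in $N$, since otherwise $H = \sum_i k c_i \subseteq N$ or $H = \sum_i c_i E \subseteq N$, respectively. Hence at least one $c_i$ must be a splitter. I expect the main delicate step to be the equivalence ``$H' f$ is nil iff $H' f \subseteq \J(E)$''; this is what lets us describe the non-splitters by a single fixed ideal instead of as a union of ``bad'' sets, and once it is in hand the rest is a routine bimodule closure check.
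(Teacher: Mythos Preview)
Your proposal is correct and follows essentially the same route as the paper: both hinge on the equivalence ``$f$ is not a splitter $\iff H'f\subseteq \J(E)$'' and then use that this condition is closed under right $E$-linear (hence $k$-linear) combinations. The only cosmetic difference is that the paper runs the contrapositive directly (if no generator in $B$ is a splitter then $H'H=\sum_{b\in B}H'b\,E\subseteq \J(E)$) and reduces the $k$-generator case to the $E$-generator case in one line, whereas you package the non-splitters as a named sub-bimodule $N$ and verify both closures separately; the content is the same.
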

\begin{proof}
To see that a set of $\End_R(M_1)$-module generators is enough, note that 
\begin{equation*}
f \text{ is not a splitter } \iff \Hom_R(M_2,M_1)f\subseteq \J(\End_R(M_1)). 
\end{equation*}
Let $B$ be a set of $\End_R(M_1)$-module generators of $\Hom_R(M_1,M_2)$. Suppose that $B$ does not contain any splitters. Then
\begin{align*}
\Hom_R(M_2,M_1)\Hom_R(M_1,M_2) & =\sum_{b\in B}\Hom_R(M_2,M_1)b\End_R(M_1) \\
& \subseteq \J(\End_R(M_1)),
\end{align*}
and therefore $\Hom_R(M_1,M_2)$ cannot contain a splitter.
 
Finally, note that any set of left-$k$-module generators is also a set of right-$\End_R(M_1)$-module generators.
\end{proof}


Putting these results together, we can construct an algorithm satisfying the requirements of Theorem \ref{mth1} as follows. We view $R$ as an algebra over its prime subring $k$. Let $M$ and $f$ be two auxiliary variables that at the end of the algorithm will become equal to the desired maximal length direct summand of $M_1$ that has an isomorphic copy as a direct summand of $M_2$, and the isomorphism-inducing $f\in\Hom_R(M_1,M_2)$.
At the beginning of the algorithm, we put $M$ and $f$ equal to zero. We compute $B$, a set of $k$-generators of $\Hom_{R}(M_1,M_2)$ (or a set of $\End_{R}(M_1)$-module generators thereof). For each element of $B$, we test if it is a splitter: by Proposition \ref{lookfor}, if a splitter exists, we will find one inside $B$. 
Finding a splitter $b\in B$ also gives us a decomposition $M_1=N_1\oplus K_1$ and $M_2=N_2\oplus K_2$ with $N_1\cong N_2$ via $b$. We make the following replacements: $M:=M\oplus N_1$ and $f:=f\oplus (\text{the restriction of $b$ to $N_1$})$, $M_1:=K_1$, $M_2:=K_2$, $B:=\text{set of $k$-module generators of the new $\Hom_{R}(M_1,M_2$})$ and we start again. Note that we are all the time assuming the Krull-Remak-Schmidt Theorem (\cite{lang}, Chapter X, Theorem 7.5), which ensures existence and uniqueness up to isomorphism of the direct summands of $M_1$ and $M_2$.

\section{MIP via an approximation of the Jacobson radical}

Let $R$ be a finite ring. We observe that determining whether two finite $R$-modules $M_1$ and $M_2$ are isomorphic reduces to determining if a module is free of rank one. Let $E:=\End_{R}(M_1)$ and $K:=\Hom_{R}(M_2,M_1)$. Note that $K$ is a left $E$-module.

\begin{prop}\label{tfae}
Let $R,M_1,M_2,E,K$ be as above. The following are equivalent:

(i) $M_1\cong M_2$ as $R$-modules. 

(ii) $E\cong K$ as $E$-modules and the image of $1_E$ in $K$ under any such isomorphism is an isomorphism between $M_1$ and $M_2$.

(iii) There exists an $E$-module isomorphism $\phi: E\rightarrow K$ such that $\phi(1_E)$ is an isomorphism between $M_1$ and $M_2$.
\end{prop}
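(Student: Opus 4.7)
The plan is to prove the cycle $\mathrm{(i)} \Rightarrow \mathrm{(ii)} \Rightarrow \mathrm{(iii)} \Rightarrow \mathrm{(i)}$. Two of these implications are essentially formal. For $\mathrm{(iii)} \Rightarrow \mathrm{(i)}$ one observes that $\phi(1_E)$ is by hypothesis an $R$-isomorphism $M_2 \to M_1$. For $\mathrm{(ii)} \Rightarrow \mathrm{(iii)}$ one picks any $E$-module isomorphism $\phi: E \to K$ whose existence is part of $\mathrm{(ii)}$, and the additional clause in $\mathrm{(ii)}$ guarantees that $\phi(1_E)$ is an $R$-isomorphism, which is precisely what $\mathrm{(iii)}$ asks for.

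The substance of the proposition therefore lies in $\mathrm{(i)} \Rightarrow \mathrm{(ii)}$. Assuming $\mathrm{(i)}$, I would first fix an $R$-isomorphism $\sigma: M_2 \to M_1$ and use it to define a reference map $\tau: E \to K$ by $\tau(e) = e \circ \sigma$. $E$-linearity is immediate from associativity of composition, and $k \mapsto k \circ \sigma^{-1}$ is a two-sided inverse, so $\tau$ is an $E$-module isomorphism. In particular $E \cong K$ as left $E$-modules, and $K$ is free of rank one over $E$ with generator $\sigma$.

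For the second half of $\mathrm{(ii)}$, I would take an arbitrary $E$-module isomorphism $\phi: E \to K$ and transport it back via $\tau$, obtaining an $E$-module automorphism $\alpha := \tau^{-1} \circ \phi$ of the left regular $E$-module $E$. Any $E$-linear endomorphism of this module is right multiplication by its value at $1_E$, so $\alpha$ is right multiplication by $u := \alpha(1_E)$; the invertibility of $\alpha$ forces $u$ to be a unit of $E$, i.e.\ an $R$-automorphism of $M_1$. Applying $\tau$ then gives $\phi(1_E) = \tau(u) = u \circ \sigma$, a composition of two $R$-isomorphisms and hence itself an $R$-isomorphism $M_2 \to M_1$.

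I expect no real obstacle: the essential point is simply that, once $M_1 \cong M_2$ is known, $K$ becomes a free left $E$-module of rank one, so the generators of $K$ form a single orbit under $E^{\times}$ acting on $\sigma$, and an $E$-module isomorphism $E \to K$ must send $1_E$ to such a generator.
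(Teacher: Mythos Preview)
Your proof is correct and follows the same cycle $\mathrm{(i)}\Rightarrow\mathrm{(ii)}\Rightarrow\mathrm{(iii)}\Rightarrow\mathrm{(i)}$ as the paper, with the substantive work in $\mathrm{(i)}\Rightarrow\mathrm{(ii)}$. The difference lies in how the second clause of $\mathrm{(ii)}$ is established. The paper fixes an $R$-isomorphism $f\in K$, uses surjectivity of an arbitrary $\phi$ to write $f=\phi(\epsilon)=\epsilon\cdot\phi(1_E)$ for some $\epsilon\in E$, deduces that $\lambda:=\phi(1_E)$ is injective because $f$ is, and then obtains surjectivity of $\lambda$ from the fact that $M_1$ and $M_2$ have the same length. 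You instead pull $\phi$ back through your reference isomorphism $\tau$ to get an $E$-automorphism of ${}_EE$, identify it as right multiplication by a unit $u\in E^\times$, and read off $\phi(1_E)=u\circ\sigma$ directly as a composite of isomorphisms. Your route is a bit more structural and, notably, never invokes the finite-length hypothesis; the paper's length argument is the one point where finiteness is used in its proof of this proposition.
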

\begin{proof}
The implications $(ii\Rightarrow iii)$ and $(iii\Rightarrow i)$  are immediate.
For $(i\Rightarrow ii)$, suppose $f:M_1\xrightarrow{\sim} M_2$. Then $E\cong K$, so let $\phi:E\xrightarrow{\sim}K$ be any such isomorphism. Let $\lambda:=\phi(1)$. Then there exists a unique $\epsilon\in E$ such that 
\begin{equation*}
f=\phi(\epsilon)=\phi(\epsilon \cdot 1)=\epsilon \phi(1)=\epsilon \lambda,
\end{equation*}
where the third equality follows by $E$-linearity of $\phi$. 
Since $f$ is injective, so must $\lambda$ be. Moreover, since $M_1$ and $M_2$ have the same length, $\lambda$ must also be surjective.
\end{proof}

Hence an algorithm that can establish freeness of rank one of a module provides a solution to the module isomorphism problem. 

\subsection{Computing minimum number of generators}
Let $R$ be a left-Artinian ring. Suppose, along with $R$, we are given a collection $S_1,\ldots , S_t$ of nonzero left $R$-modules of finite length. Ideally, we would like this collection to be a set of representatives for the isomorphism classes of simple $R$-modules. However, for now, we only require that each simple $R$-module occurs in at least one of the $S_i$, i.e. it occurs as a quotient in its composition series. We can take, for example, $t=1$ and $S_1={_R}R$.

Let 
\begin{equation}
\mathfrak{a}=\bigcap_{i=1}^t \ann_R(S_i),
\end{equation} 
where $\ann_R(S_i)=\ker(R\to\End_{\mathbb{Z}}(S_i))$. Again, ideally we would like $\mathfrak{a}$ to be the Jacobson radical, $\J(R)$. In reality, we only have one inclusion, namely $\mathfrak{a}\subseteq \J(R)$, since an element of $\mathfrak{a}$ will kill all the $S_i$'s and so will kill all submodules, quotients and submodules of quotients of the $S_i$. Since every simple $R$-module occurs in at least one of the $S_i$, we have that $\mathfrak{a}$ kills all simple $R$-modules, which is equivalent to being inside the Jacobson radical. Furthermore, since $R$ is left-Artinian, $\J(R)$ is nilpotent and hence $\mathfrak{a}$ is nilpotent.

We will construct an algorithm that, given $R$ and a collection of $S_i$, either ``improves" the sequence of $S_i$ or computes $\mathfrak{a}$ and an isomorphism of $R$-modules
\begin{equation}\label{iss}
R/\mathfrak{a}\xrightarrow{\sim} \bigoplus_{i=1}^t S_i^{a_i},
\end{equation}
for suitable $a_i\in\mathbb{Z}_{\geq 0}$.
Similarly, we construct an algorithm that, when also given a finitely generated $R$-module $M$, either ``improves" the sequence of $S_i$ or computes an isomorphism of $R$-modules
\begin{equation}
M/\mathfrak{a}M\xrightarrow{\sim} \bigoplus_{i=1}^t S_i^{c_i},
\end{equation}
for suitable $c_i\in\mathbb{Z}_{\geq 0}$.
Consider the quantity $l{(S_i)_{i=1}^t}=\sum_{i=1}^t(2\length(S_i)-1)\in\mathbb{Z}_{\geq 0}$. An ``improvement" in the sequence is measured by a decrease in $l{(S_i)_{i=1}^t}$ and occurs either when we remove one of the $S_i$ from the list (either because the list already contains an isomorphic copy of it or because it is not needed) or when we discover a nonzero proper submodule $T$ of one of the $S_i$ which witnesses nonsimplicity of $S_i$ and which we use to replace $S_i$ by $S_i/T$ and $T$, that now stand a better chance of being simple. Note that the factor 2 in the expression of $l{(S_i)_{i=1}^t}$ ensures it decreases even when we remove an $S_i$ from the list. \\

Let us first write out the details of the routine that finds an isomorphism as in (\ref{iss}). We will call this routine UPDATE and within the main algorithm we will call it whenever we have improved on our sequence of $S_i$'s and we want to update our $\mathfrak{a}$, the sequence of $a_i$'s and the isomorphism $R/\mathfrak{a}\xrightarrow{\sim} \bigoplus_{i=1}^t S_i^{a_i}$.

Let $\mathfrak{S}=\lbrace (S_i)_{i=1}^t \mid t\in \mathbb{Z}_{\geq 0}$, each $S_i$ is a nonzero finite length $R$-module and each simple $R$-module occurs as a factor in the composition series of at least one $S_i\rbrace$.

\begin{prop}\label{aa1}
There exists a deterministic polynomial time algorithm that takes as input a finite ring $R$ and a collection of modules $(S_i)_{i=1}^t\in\mathfrak{S}$ and outputs a sequence of integers $a_1,\ldots ,a_t\in\mathbb{Z}_{>0}$, a two-sided nilpotent ideal $\mathfrak{a}'$ of $R$ and an isomorphism $\varphi:R/\mathfrak{a}'\xrightarrow{\sim} \bigoplus_{i=1}^{t'} (S_i')^{a_i}$, where $t' \in\mathbb{Z}_{\geq 0}$, $\mathfrak{a}'=\cap_{i=1}^{t'}\ann_R(S_i')$ and $(S_i')_{i=1}^{t'}\in\mathfrak{S}$ is such that $l{(S_i')_{i=1}^{t'}}\leq l{(S_i)_{i=1}^t}$.
\end{prop}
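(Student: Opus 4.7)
The plan is to attempt to build the desired isomorphism directly from the input sequence $(S_i)$, outputting as soon as the construction succeeds and improving the sequence at every point where it fails. Each improvement strictly decreases the quantity $l$ from the statement, and since $l \leq 2\sum_i \length(S_i) \leq 2\log_2 |R|$ is polynomial in the input, the recursion terminates after polynomially many passes.

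First I would compute $\mathfrak{a} = \bigcap_i \ann_R(S_i)$ and $\bar R := R/\mathfrak{a}$; the hypothesis on the $S_i$ forces $\mathfrak{a} \subseteq \J(R)$, so $\mathfrak{a}$ is nilpotent. For each $i$, compute $D_i := \End_R(S_i)$ and test whether $D_i$ is a division ring by searching for a nonzero $f \in D_i$ with nontrivial kernel on $S_i$ (for a finite-length module, non-unit is equivalent to non-injective). Any such $f$ furnishes a proper nonzero submodule $\ker(f) \subsetneq S_i$; replace $S_i$ by $\ker(f)$ and $S_i/\ker(f)$ and recurse. Similarly, for each pair $i \neq j$, compute $\Hom_R(S_i, S_j)$; any nonzero map is either an isomorphism (remove $S_j$) or has a nontrivial kernel or proper image (split the relevant module). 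All of these replacements strictly decrease $l$.

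Assuming none of the shortcuts above fires, set $a_i := \log_{|D_i|}|S_i|$, a positive integer since $S_i$ is a nonzero right $D_i$-vector space, and form
\[
A := \prod_i \End_{D_i}(S_i).
\]
As a left $\bar R$-module, $A \cong \bigoplus_i S_i^{a_i}$, since each factor $\End_{D_i}(S_i)$ splits into $a_i$ columns each isomorphic to $S_i$. The canonical ring homomorphism $\phi\colon \bar R \to A$, $\bar r \mapsto (s \mapsto rs)_i$, has kernel $(\bigcap_i \ann_R(S_i))/\mathfrak{a} = 0$, hence is injective. Checking $|\bar R| = |A|$ decides surjectivity; if it holds, $\phi$ (composed with $A \cong \bigoplus_i S_i^{a_i}$) is the desired $\varphi$ and we output it with the sequence unchanged.

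If $\phi$ is not surjective, some component $\bar R \to \End_{D_i}(S_i)$ is not surjective. Let $B$ denote its image. Since $\End_B(S_i) = \End_R(S_i) = D_i$, Jacobson density would force $B = \End_{D_i}(S_i)$ if $S_i$ were simple as a $B$-module, so $S_i$ must have a proper nonzero $R$-submodule $T$. The main technical obstacle is to extract $T$ in polynomial time: I would reduce via the Sylow decomposition of $S_i$ to the case $|S_i| = p^k$ for a single prime $p$, further to the case $pS_i = 0$ (otherwise $pS_i$ itself is a proper submodule), and finally invoke a standard algorithm for finding a nontrivial common invariant subspace of a set of $\mathbb{F}_p$-linear operators on an $\mathbb{F}_p$-vector space. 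Replace $S_i$ by $T$ and $S_i/T$ and recurse on the improved sequence.
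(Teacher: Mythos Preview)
Your outline is mathematically sound but takes a very different route from the paper's, and it leans on heavy external subroutines at precisely the points where the paper is deliberately elementary.

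The paper never tests whether $\End_R(S_i)$ is a division ring, never computes $\Hom_R(S_i,S_j)$, never invokes Jacobson density, and never calls an invariant-subspace finder. Instead it builds $\varphi$ incrementally: starting from $\mathfrak{b}=R$ and the zero map, at each step it picks $s\in S_h'$ with $\mathfrak{b}s\neq 0$ and tries to extend $\varphi$ by $r\mapsto rs$ into a fresh copy of $S_h'$. This succeeds exactly when $\mathfrak{b}s=S_h'$, in which case $\mathfrak{b}$ shrinks; when it fails, the submodule $\mathfrak{b}s$ is itself a proper nonzero $R$-submodule of $S_h'$, handed to us for free. So the witness of non-simplicity is a by-product of the construction rather than the target of a separate search. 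Each pass either decreases $\length(\mathfrak{b})$ or decreases $l$, and both are bounded by the input size.

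Your approach, by contrast, relies on two subroutines you do not specify: (a) deciding whether a finite ring $D_i$ is a division ring and producing a non-unit if not, and (b) extracting a proper $R$-submodule of an $S_i$ known to be reducible. Both are doable in deterministic polynomial time once you have reduced to an $\mathbb{F}_p$-algebra (your reduction via the additive order of $1_{D_i}$ is correct), but the known algorithms---R\'onyai-type structure computations, radical computation, polynomial factorisation over $\mathbb{F}_p$---are exactly the machinery the paper advertises avoiding. Your density argument that non-surjectivity of a component $\bar R\to\End_{D_i}(S_i)$ forces non-simplicity of $S_i$ is correct, and so is the implicit converse (if every component is surjective then each $S_i$ is simple, the cross-$\Hom$ condition makes the annihilators pairwise comaximal, and the Chinese Remainder Theorem gives surjectivity of $\phi$); the gap is purely in how you then \emph{find} the submodule.

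One small correction: the bound $l\leq 2\log_2|R|$ is false in general, since the $S_i$ are part of the input and nothing prevents, say, many copies of ${}_RR$. What is true, and sufficient, is that $l$ is bounded by its initial value, which is polynomial in the total input size.
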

\begin{proof}
Let $\mathfrak{b}$ be a left ideal of $R$ which we will use as an intermediate variable that at the end of the algorithm will become equal to the desired $\mathfrak{a}'$.
We start off by setting $\mathfrak{b}=R$, $t'=t$, $a_i=0$ and $S_i'=S_i$ for $1\leq i\leq t$, and $\varphi=0$. Throughout the algorithm $\varphi:R/\mathfrak{b}\xrightarrow{\sim} \bigoplus_{i=1}^{t'} (S_i')^{a_i}$ and $\cap_{i=1}^{t'}\ann_R(S_i')\subseteq \mathfrak{b}$ will be invariant. If for all $i$ we have $\mathfrak{b}S_i'=0$, then we are done. Otherwise, we choose $1\leq h\leq t'$ and $s\in S_h'$ such that $\mathfrak{b}s\neq 0$. We define 
\begin{equation}
\psi:R\to S_h'\oplus \bigoplus_{i=1}^{t'} (S_i')^{a_i}, \quad \psi(r)=(rs,\varphi(r)).
\end{equation}
Then $\ker(\psi)= \ann_R(s)\cap\mathfrak{b}\subsetneq\mathfrak{b}$, since $\mathfrak{b}$ did not annihilate $s$. Let $\overline{\psi}$ be the map induced by $\psi$ on $R/\ker(\psi)$. Then
\begin{equation*}
\overline{\psi} \text{ isomorphism } \iff \psi \text{ surjective } 
\iff S_h' \oplus\lbrace 0 \rbrace  \subseteq \im(\psi)
\iff \mathfrak{b}s=S_h'.  
\end{equation*}
This comes as a confirmation of our intuition: we are treating the $S_i'$ as if they were simple, so if $0\neq \mathfrak{b}s\leq S_h'$, then we would want $\mathfrak{b}s$ to be the whole of $S_h'$.

If $\psi$ is surjective, we make the following replacements:
$a_h:=a_{h}+1$, $\varphi:=\psi$ and $\mathfrak{b}:=\ker{\psi}$. Note that now $\mathfrak{b}$ has smaller length than before.

If $\psi$ is not surjective, we replace $S_h'$ by $S_h'/\mathfrak{b}s$, $t'$ by $t'+1$ and put $S_{t'+1}':=\mathfrak{b}s$ and $a_{t'+1}:=0$. In addition, we replace $\varphi$ by the composition $\pi\circ \varphi$, where $\pi$ is the canonical map $\pi:\bigoplus_{i=1}^{t'} [\old (S_i')^{\old a_i}]\to \bigoplus_{i=1}^{t'+1} [\new (S_i')^{\new a_i}]$ and we replace $\mathfrak{b}$ by the kernel of our new $\varphi$. Note that the new $\mathfrak{b}$ will contain the old $\mathfrak{b}$, but the improvement is now sitting in the new $S_i'$.

Consider the quantity $l{(S_i')_{i=1}^{t'}}=\sum_{i=1}^{t'}(2\length(S_i')-1)$. Then $0\leq t'\leq l{(S_i')_{i=1}^{t'}}$, since each $S_i'$ has length at least 1. Also, $l{(S_i')_{i=1}^{t'}}$ is bounded above by its initial value. At each iteration of the algorithm, we either see a decrease in the value of $l{(S_i')_{i=1}^{t'}}$, when we improve our sequence, or we see a decrease in the length of $\mathfrak{b}$, whose length is initially equal to $\length(R)$. Since $\length(R)\leq \log_2(\vert R \vert)$ (recall that $R$ was finite), the algorithm runs in polynomial time.
\end{proof}

We now turn to the main algorithm:

\begin{thm} 
There exists a deterministic polynomial time algorithm that, given a finite ring $R$ and a finite $R$-module $M$, determines the minimum number of $R$-generators of $M$ and outputs a list of such generators.
\end{thm}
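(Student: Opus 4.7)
The plan is to carry the ``as if semisimple'' strategy of Proposition \ref{aa1} across to the module $M$. First, initialise the candidate list by setting $t=1$, $S_1={_R}R$, and invoke UPDATE to obtain a two-sided nilpotent ideal $\mathfrak{a}\subseteq \J(R)$, integers $a_i\in \mathbb{Z}_{>0}$, and an isomorphism $R/\mathfrak{a}\xrightarrow{\sim}\bigoplus_{i=1}^t S_i^{a_i}$. Because $\mathfrak{a}$ is nilpotent, Nakayama's Lemma reduces the problem to computing the minimum number of generators of $M/\mathfrak{a}M$ over $R/\mathfrak{a}$, together with a lift of any such minimal generating set to $M$.

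Next, I would design an analogue of UPDATE for $M$: applied to $M/\mathfrak{a}M$, it either produces integers $c_i\geq 0$ and an isomorphism $M/\mathfrak{a}M\xrightarrow{\sim}\bigoplus_{i=1}^t S_i^{c_i}$, or (as in the non-surjective branch of the proof of Proposition \ref{aa1}) it exposes a proper nonzero submodule $T$ of some $S_h$. In the latter case I replace $S_h$ by the pair $(S_h/T,T)$, which strictly decreases $l{(S_i)_{i=1}^t}$, invoke UPDATE again on the refined list, and restart. Since $l{(S_i)_{i=1}^t}$ is bounded above by its initial value (itself bounded by $2\log_2|R|$), at most $O(\log|R|)$ such restarts occur in total.

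With $a_i$ and $c_i$ in hand, set $n:=\max_i\lceil c_i/a_i\rceil$, taking $n=0$ when all $c_i$ vanish. Tentatively assuming the $S_i$ are pairwise non-isomorphic simple modules---so that $R/\mathfrak{a}$ is semisimple and Artin--Wedderburn identifies $n$ with the minimum number of generators of $\bigoplus_i S_i^{c_i}$ over $\bigoplus_i S_i^{a_i}$---I would then construct a surjection $(R/\mathfrak{a})^n\twoheadrightarrow M/\mathfrak{a}M$ greedily: at each step $k$, choose $\overline{m_k}\in M/\mathfrak{a}M$ so as to maximally enlarge the submodule $R\overline{m_1}+\cdots+R\overline{m_k}$, concentrating on the isotypic components still deficient relative to the target decomposition. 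Any lifts $m_k\in M$ of these $\overline{m_k}$ generate $M$ by Nakayama, and under the tentative assumption the count $n$ matches the lower bound on the minimum.

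The main obstacle is the side-exit: translating a failure of the greedy step into a genuine improvement of $(S_i)$. If the construction stalls---either because no element of $M/\mathfrak{a}M$ extends the current partial image in the predicted way, or because the span after $n$ steps still misses $M/\mathfrak{a}M$---the obstruction must expose either a proper nonzero $R$-submodule of some $S_h$ (refuting its assumed simplicity) or a nontrivial $R$-linear isomorphism between two $S_i,S_j$ marking one as redundant; in each case the corresponding modification of $(S_i)$ decreases $l{(S_i)_{i=1}^t}$ after a fresh UPDATE, and we restart. The delicate points are the rigorous justification that every stuck configuration yields such a concrete witness (in particular, a deterministic way to compare two $S_i$'s without presupposing the broader MIP result), and the verification that each inner subroutine---UPDATE, nilpotence testing via Proposition \ref{find}, and linear algebra on finite abelian groups---runs in time polynomial in $|R|$ and $|M|$; combined with the $O(\log|R|)$ bound on restarts, this yields the polynomial time deterministic algorithm asserted by the theorem.
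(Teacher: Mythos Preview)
Your overall strategy matches the paper's: run UPDATE, decompose $M/\mathfrak{a}M$ as $\bigoplus_i S_i^{c_i}$ with side-exits on discovered nonsimplicity, set $n=\max_i\lceil c_i/a_i\rceil$, and lift via Nakayama. The paper obtains the $c_i$ much as you suggest, by starting from a surjection $(R/\mathfrak{a})^d\twoheadrightarrow M/\mathfrak{a}M$ (coming from any generating set of $M$) and paring it down to an isomorphism following the standard semisimple argument, with any obstruction producing a proper nonzero submodule of some $S_h$.

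Where you diverge is in the final step, and there you leave a real gap. You propose building the surjection $(R/\mathfrak{a})^n\twoheadrightarrow M/\mathfrak{a}M$ greedily and then extracting a witness from any ``stuck configuration''; you yourself flag the justification of this as unresolved. The paper bypasses the issue entirely: before attempting any construction it simply computes $\Hom_R(S_i,S_h)$ for all $i\neq h$ by linear algebra over the finite abelian groups. If some such $\Hom$ contains a nonzero $f$, then either $\ker f\neq 0$, or $\im f\neq S_h$, or $f$ is an isomorphism, and each case immediately refines the list and triggers a restart. If all these $\Hom$ groups vanish, then every $R$-map $\bigoplus_i S_i^{a_i m}\to\bigoplus_i S_i^{c_i}$ is block-diagonal, so a surjection forces $a_i m\geq c_i$ for every $i$ by comparing lengths, which gives minimality of $n$; and the surjection itself is assembled by projecting each $S_i^{a_i n}$ onto its first $c_i$ coordinates. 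This directly dissolves your worry about ``comparing two $S_i$'s without presupposing the broader MIP result'': no isomorphism test is needed, only the computation of a $\Hom$ group, and any nonzero element of it is already the desired witness.
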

\begin{proof}
We begin by running the UPDATE algorithm presented in Proposition \ref{aa1} with input $t=1$ and $S_1={_R}R$. 
Let $X=\lbrace x_1,\ldots, x_d\rbrace$ be a set of $R$-generators of $M$. Then $\overline{X}=\lbrace x_i+\mathfrak{a}M\mid 1\leq i\leq d\rbrace$ generates $M/\mathfrak{a}M$ over $R/\mathfrak{a}$. This gives a surjective
homomorphism $(R/\mathfrak{a})^d\cong \bigoplus_{i=1}^t S_i^{a_i d} \twoheadrightarrow M/\mathfrak{a}M$. Relabel the $S_i$ to get a map $\bigoplus_{i\in I} S_i\twoheadrightarrow M/\mathfrak{a}M$. We would like to find a subset $J\subseteq I$ for which this map becomes an isomorphism. In this process, the standard proof of its existence in the case where the $S_i$ are simple (see \cite{lang}, Chapter XVII, Lemma 2.1) may produce a witness of nonsimplicity of some $S_i$, in which case we refine the sequence and call the UPDATE subroutine to start over.
In the end, we will have produced an isomorphism $\bigoplus_{i=1}^t S_i^{c_i} \xrightarrow{\sim}M/\mathfrak{a}M $, for some $c_i\in\mathbb{Z}_{>0}$ (note that the ideal $\mathfrak{a}$ may now be different from the one we started with). 

Let $n=\max_i\lbrace \lceil \frac{c_i}{a_i}\rceil \rbrace$. 
If for all $i\neq h$ we have $\Hom_R(S_i,S_h)=0$, then there is a surjective map $(R/\mathfrak{a})^n\twoheadrightarrow M/\mathfrak{a}M$ and $n$ is minimal with this property. Since $\mathfrak{a}\subseteq \J(R)$, we can lift this to a map $R^n\twoheadrightarrow M$ and produce $n$ generators of $M$. 
If, however, for some $i\neq h$ we have that $\Hom_R(S_i,S_h)$ contains a nonzero element $f$, then we can once again refine our sequence (either using $\ker(f)\neq 0$ or $\im(f)\neq S_h$ or removing one of $S_i$ or $S_h$ from the list if they are isomorphic) and start over by calling the UPDATE routine on our newly improved sequence. 

To establish the running time, consider again the quantity $l{(S_i)_{i=1}^t}=\sum_{i=1}^t (2\length(S_i)-1)$. At each iteration of the algorithm, we either produce an output or we improve on our sequence, which results in a decrease in $l{(S_i)_{i=1}^t}$.
\end{proof}

In particular, we may use this algorithm to determine if a module is cyclic or free of rank 1 (by comparing cardinalities), thus  settling the module isomorphism problem yet again (see Proposition \ref{tfae}). 

\begin{note}
As we have described the algorithm, after we have improved our sequence, calling the UPDATE subroutine overwrites our so far acquired knowledge about $\mathfrak{a}$.
There is possibly a way of saving some of this information by running a modified version of the UPDATE routine, which would thus make the main algorithm slightly more efficient. However, being more precise about this here would obscure the idea of the algorithm.
\end{note}

\begin{acknowledgements}
I would like to thank my PhD supervisor, Professor Hendrik Lenstra, for his generous guidance and his unfailing patience, particularly in the production of the side-exit algorithm. 
\end{acknowledgements}



\end{document}